
\documentclass[letterpaper, 10 pt, conference]{ieeeconf}  

\IEEEoverridecommandlockouts                              
\overrideIEEEmargins

\usepackage{amsmath,amssymb}
\usepackage[hidelinks]{hyperref}
\usepackage{xcolor}
\usepackage{graphicx}
\usepackage{booktabs}
\usepackage{dsfont}
\usepackage{algorithm}
\usepackage{algpseudocode}
\usepackage[style=ieee,backend=bibtex,url=false,doi=false,isbn=false,date=year,citestyle=numeric-comp,maxbibnames=10,minbibnames=10,maxcitenames=10,mincitenames=10]{biblatex}

\usepackage{xcolor} 

\bibliography{paper}

\newtheorem{ass}{Assumption}
\newtheorem{thm}{Theorem}

\newtheorem{rem}{Remark}
\newtheorem{lem}{Lemma}

\newcommand{\Id}[1]{\color{red}  \color{black}}


\title{\LARGE \bf
	An essentially decentralized interior point method for control
}


\author{Alexander Engelmann, Gösta Stomberg and Timm Faulwasser
	\thanks{The authors are  with the Institute for Energy Systems, Energy Efficiency and Energy Economics, TU Dortmund University, Dortmund, Germany.
		{\tt\small goesta.stomberg@tu-dortmund.de, \{alexander.engelmann, timm.faulwasser\}@ieee.org}}%
}

\begin{document}

	\maketitle
	\thispagestyle{empty}
	\pagestyle{empty}

	\begin{abstract}
		Distributed and decentralized optimization are key  for the control of networked systems.
		Application examples include  distributed model predictive control and  distributed sensing or estimation. 
		Non-linear systems, however, lead to  problems with non-convex constraints for which classical decentralized optimization algorithms  lack convergence guarantees. 
		Moreover, classical decentralized algorithms usually exhibit only linear convergence. 
		This paper presents an essentially decentralized  primal-dual interior point method with convergence guarantees for non-convex problems at a {super}linear rate. 
		We  show that the proposed method works reliably on a numerical example from power systems.  
		Our results indicate that the proposed method outperforms ADMM in terms of computation time and  computational complexity of the subproblems.       
		
		\emph{Keywords:}
		decentralized optimization, non-convex optimization, interior point methods, optimal power flow   
	\end{abstract}
	
	\section{Introduction}
	
	Distributed and decentralized optimization algorithms are of great interest for the control of  multi-agent systems.\footnote{\label{fn:dec} Throughout the paper, we refer to an optimization algorithm as \textit{distributed} if it contains a (preferably cheap) coordination problem solved by a central entity/coordinator. We denote an optimization algorithm as \textit{decentralized} in absence of such a coordinator and the agents rely purely on neighbor-to-neighbor communication \cite{Bertsekas1989,Nedic2018}. 
		We call an algorithm essentially decentralized if it has no central coordination but requires a small amount of central communication.
		Note that the definition of distributed and decentralized control differs~\cite{Scattolini2009}. }
		Many of these systems are governed by non-linear dynamics, which lead to optimization problems with non-convex  constraints.
		Classical decentralized optimization algorithms such as ADMM or Dual Decomposition, however, lack convergence guarantees for problems with non-convex constraints. 
		One of the few exceptions is the recently proposed Augmented Lagrangian Alternating Direction Inexact Newton (ALADIN) algorithm~\cite{Houska2016}. 
		ALADIN is a distributed algorithm that solves a centralized Quadratic Program~(QP) in its coordination step.
		Bi-level ALADIN~\cite{Engelmann2020c} decentralizes the solution of this QP and is therefore one of the few decentralized algorithms, which can provably solve non-convex problems with local convergence guarantees.
		However,  advanced decentralized globalization and numerical stabilization are not yet fully explored for~ALADIN.
	
	Interior point methods are successfully applied in centralized non-convex optimization and they have shown to solve large-scale problems reliably.
	Hence, attempts have been made to distribute interior point methods to transfer some of their properties to the distributed and decentralized setting. 
	These approaches typically employ Schur-complement techniques,  which shift main computational steps of the linear algebra to distributed computing architectures, see~\cite{Zavala2008,Chiang2014,Word2014}.
	Moreover, a primal barrier method has been proposed in~\cite{Bitlislioglu2017a}. 
	Therein, the authors consider non-convex \emph{in}equality constraints.
	The method solves a linear system of equations for coordination in a central coordinator.
	
	In the references above \emph{the} obstacle for decentralization is a system of linear equations, which couples the subproblems. 
	Solving this system of linear equations  via a coordinator renders these algorithms distributed but {not} decentralized. 
	The authors of \cite{Kang2014} solve the linear system iteratively by means of a conjugate gradient~(CG) method.
	However, \cite{Kang2014} does {not}  exploit sparsity in the Schur complement, i.e., the CG method is executed in a centralized fashion.
	Hence, there seems to be a gap in terms of decentralized interior point methods.

	This paper addresses this gap. 
	Similar to the references mentioned, we apply the Schur complement to reduce the dimension of the system of linear equations.
	We solve this system by means of essentially decentralized inner algorithms.
	Finite termination of inner  algorithms leads to  inexact solutions.
	Thus, we use ideas from inexact Newton methods \cite{Byrd1998,Dembo1982} to  establish superlinear local convergence under mild assumptions. 
	{As a particular inner algorithm, we employ the essentially decentralized conjugate gradient method (d-CG) from \cite{Engelmann2021}. 
		However,  the proposed framework is not limited to d-CG---other algorithms such as decentralized ADMM may also be used. 
	{Since we require scalar communication for stepsize selection, we obtain an essentially decentralized  algorithm.}
	The proposed scheme requires one matrix inversion per subproblem and outer iteration only. 
Thus, the complexity in the subproblems is   lower in comparison to ALADIN and ADMM, which both require to solve (convex/non-convex) NLPs locally. 
	We draw upon an example from power systems illustrating  the numerical performance for medium-sized problems. }
	
		The paper is organized as follows: 
		\autoref{sec:PrbStat} introduces the problem formulation and briefly recalls basics of centralized interior point methods. 
In 	\autoref{sec:decIP} we show how to exploit partially separable structure in the problem formulation to reduce the dimension in the Newton step.
		Moreover, we recall how to solve this reduced system in a decentralized
		fashion by tailored algorithms. 
		Using these (inner) algorithms leads to inexact Newton steps---hence we derive upper bounds on this inexactness such that superlinear local convergence is still guaranteed in \autoref{sec:locConv}.
		In \autoref{sec:NumRes} we apply our method to a numerical case study from power systems.

	\section{Preliminaries and problem statement} \label{sec:PrbStat}
	
	We consider partially separable problems
	\begin{subequations} \label{eq:sepForm}
		\begin{align} 
		\min_{x_i,\dots,x_{|\mathcal{S}|}} \; \sum_{i \in \mathcal{S}} &f_i(x_i) \\
		\text{subject to }\quad  g_i(x_i)&=0, & \forall i \in \mathcal{S}, \label{eq:sepProbGi} \\
		h_i(x_i) &\leq 0, & \forall i \in \mathcal{S},\label{eq:sepProbHi} \\
		\sum_{i \in \mathcal{S}} A_ix_i &= b,\label{eq:consConstr}
		\end{align}
	\end{subequations}
	where $\mathcal{S}=\{1,\dots, |\mathcal{S}|\}$ denotes a set of subsystems/agents each of which is equipped with an objective function $f_i:\mathbb{R}^{n_i} \rightarrow \mathbb{R}$ and local equality and inequality constraints $g_i, h_i:\mathbb{R}^{n_i} \rightarrow \mathbb{R}^{n_{gi}},\mathbb{R}^{n_{hi}}$. The matrices $A_i \in \mathbb{R}^{n_c \times n_i}$ and the vector $b\in \mathbb{R}^{n_c}$  encode coupling between subsystems.

	\subsection{Primal-dual interior point methods}

	Next, we recall the basic idea of centralized primal-dual interior point methods.
	These methods reformulate \eqref{eq:sepForm} as a barrier problem, where the  constraints \eqref{eq:sepProbHi} are replaced by logarithmic barrier terms in the objective function and the ``tightness'' of the reformulation is adjusted by a barrier parameter~$\delta >0$.
	The barrier problem reads
	\begin{subequations} \label{eq:slackReform}
		\begin{align} 
		\min_{x_1,\dots,x_{|\mathcal{S}|},v_1,\dots,v_{|\mathcal{S}|}} \; \sum_{i \in \mathcal{S}} &f_i(x_i) - \mathds 1^\top \delta  \ln (v_i) \hspace{-1.5cm} \\
		\text{subject to }\quad  g_i(x_i)&=0, &  \forall i \in \mathcal{S}, \label{eq:eqCnstr}\\
		h_i(x_i) +v_i&= 0, \;\; v_i \geq 0,& \forall i \in \mathcal{S}, \label{eq:ineqCnstr}\\
		\sum_{i \in \mathcal{S}} A_ix_i &= b \label{eq:conCnstr},
		\end{align}
	\end{subequations}
	where  $\mathds{1}= (1,\dots,1)^\top \in \mathbb{R}^{n_{hi}}$, the $\ln(\cdot)$ is evaluated component-wise, and where $v_i\in \mathbb{R}^{n_{hi}}$ are slack variables.
	Problem \eqref{eq:slackReform} is typically solved for a decreasing sequence of the barrier parameter $\delta>0$.
	Note that for $\delta \rightarrow 0$, the original problem  \eqref{eq:sepForm} and \eqref{eq:slackReform} are equivalent.
	
	Interior point methods differ in the way they update the barrier parameter, whether or not they use slack variables and how they solve the barrier problem \eqref{eq:slackReform}, cf. \cite{Nocedal2006}. Subsequently, we use a variant that applies only \emph{one} Newton step to the optimality conditions of \eqref{eq:slackReform} and then updates~$\delta$.
	This way, solving the barrier problem becomes cheap, but the update rule  for $\delta$ has to be chosen carefully~\cite[Sec. 4]{Byrd1998}.

	\section{Decomposition of interior point methods} \label{sec:decIP}
	Next, we show how to decentralize the computation of Newton steps.
	
	\subsection{A Newton step on the barrier problem}
	Consider the Lagrangian to \eqref{eq:slackReform},
	\begin{align*}
	L(p)= \sum_{i \in \mathcal{S}} f_i(&x_i) -  \mathds{1}^\top \delta  \ln (v_i) +\gamma_i^\top  g_i(x_i)    \\
	&+ \mu_i^\top (h_i(x_i) + v_i) + \lambda^\top \left  (\sum_{i \in \mathcal{S}} A_ix_i-b \right ),
	\end{align*}
	with $p=(p_1,\dots,p_{|\mathcal S|},\lambda)$ and $p_i  = ( x_i, v_i, \gamma_i, \mu_i )$, where $\gamma_i$, $\mu_i$ and $\lambda$ are Lagrange multipliers assigned to \eqref{eq:eqCnstr}, \eqref{eq:ineqCnstr} and \eqref{eq:conCnstr} respectively.
	The first-order optimality conditions read 
	\begin{align} \label{eq:KKTcond}
	F^\delta(p) = 
	\begin{pmatrix}
	F_1^\delta (p_1,\lambda) \\
	\vdots \\
	F_{|\mathcal S|}^\delta (p_{|\mathcal S|},\lambda) \\
	\sum_{i \in \mathcal{S}} A_i x_i -b
	\end{pmatrix}
	=0, 
	\end{align}
	with
	\begin{align*}
	F_i^\delta &(p_i,\lambda)= \\
	&
	\begin{pmatrix}
	\nabla_{x_i} f_i(x_i) + \nabla_{x_i} g_i(x_i) ^\top \gamma_i + \nabla_{x_i} h_i(x_i) ^\top \mu_i + A_i^\top \lambda \\
	-\delta V_i^{-1} \mathds{1}  + \mu_i \\ 
	g_i(x_i) \\
	h_i(x_i) + v_i 
	\end{pmatrix},
	\end{align*}
	and $V_i=\operatorname{diag}(v_i)$.
	An exact Newton step  $\nabla F^\delta(p)\Delta p = - F^\delta (p) $ applied to \eqref{eq:KKTcond} yields 
	\begin{align} \label{eq:structKKT}
	\begin{pmatrix}
	\hspace{-.3mm}\nabla F_1^\delta  &0& \dots& \tilde A_1^\top\hspace{-.3mm} \\
	0& \nabla F_2^\delta& \dots& \tilde A_2^\top  \hspace{-.3mm}\\
	\vdots & \vdots & \ddots& \vdots \\
	\tilde A_1 & \tilde A_2 & \dots&  0
	\end{pmatrix}
	\hspace{-1.9mm}
	\begin{pmatrix}
	\hspace{-.3mm}\Delta p_1 \hspace{-.3mm} \\
	\hspace{-.3mm}\Delta p_2\hspace{-.3mm} \\
	\vdots \\
	\Delta \lambda
	\end{pmatrix}
	\hspace{-1.4mm}
	=
	\hspace{-1.2mm} 
	\begin{pmatrix}
	-F_1^\delta \\
	-F_2^\delta   \\
	\vdots\\
	\hspace{-.6mm} b \hspace{-.3mm} -\hspace{-.5mm}  \sum_{i \in \mathcal{S}} \hspace{-.7mm} A_i x_i \hspace{-.2mm}
	\end{pmatrix}
	\end{align}
	with 
	\begin{align*}
	\nabla F_i^\delta =
	\begin{pmatrix}
	\nabla_{xx} L_i  & 0 & \nabla g_i(x_i)^\top  & \nabla h_i(x_i)^\top   \\
	0 &-V_i^{-1}  M_i & 0 & I  \\
	\nabla g_i(x_i) & 0 &0 & 0  \\
	\nabla h_i(x_i) & I & 0 & 0  \\
	\end{pmatrix} ,
	\end{align*}
	$M_i = \operatorname{diag}(\mu_i)$,
	and
	$
	\tilde A_i = 
	\begin{pmatrix}
	A_i & 0 & 0 & 0
	\end{pmatrix}$.
	Here we used that $\delta V_i^{-2} = V_i^{-1}(\delta V_i^{-1})$ and that by \eqref{eq:KKTcond} and the definition of $F_i^\delta$, $\delta V^{-1}_i = M_i$.
	Assume temporarily that  $\nabla F_i^\delta$ is invertible.
	Then, one can reduce the dimensionality of \eqref{eq:structKKT} by   solving the first $S$ block-rows in \eqref{eq:structKKT} for $\Delta p_i$. This way we obtain 
	\begin{align} \label{eq:delP}
	\Delta p_i = -  \left(\nabla F_i^\delta \right )^{-1}\left (F_i^\delta + \tilde A_i^\top \Delta \lambda  \right ) \text{ for all } i\in \mathcal{S}.
	\end{align}
	Inserting \eqref{eq:delP} into the last row of \eqref{eq:structKKT} yields the Schur-complement
	\begin{equation} \label{eq:SchurComp}
	\begin{aligned}
	\Bigg (\sum_{i \in \mathcal{S}}\tilde A_i& \left(\nabla F_i^\delta \right )^{-1}\tilde A_i^\top \Bigg ) \Delta \lambda  \\
	&= \left(\sum_{i \in \mathcal{S}} A_i x_i -  \tilde A_i \left(\nabla F_i^\delta \right )^{-1}F_i^\delta  \right) - b. 
	\end{aligned}
	\end{equation}
	Observe that once we have solved \eqref{eq:SchurComp} we can compute $\Delta p_1,\dots,\Delta p_{|\mathcal S|}$ locally in each subsystem based on $\Delta \lambda$ via backsubstitution into \eqref{eq:delP}.
	Consider 
	\begin{subequations} \label{eq:Schur}
		\begin{align} 
		S_i \doteq& \tilde A_i \left(\nabla F_i^\delta \right )^{-1}\tilde A_i^\top, \quad \text{and} \\
		s_i \doteq&  A_i x_i - \tilde A_i \left(\nabla F_i^\delta \right )^{-1}F_i^\delta -\dfrac{1}{|\mathcal{S}|} b.
		\end{align}
	\end{subequations}
	Then, equation \eqref{eq:SchurComp} is equivalent to
	\begin{align} \label{eq:schurComp}
\left (\sum_{i \in \mathcal{S}} S_i \right )\,\Delta \lambda - \sum_{i \in \mathcal{S}} s_i = S \Delta \lambda -s =0.
	\end{align}

	\subsection{Solving \eqref{eq:schurComp} via  decentralized inner algorithms}
	Next, we briefly recall how to solve \eqref{eq:schurComp} via  decentralized  algorithms---for details we refer to \cite{Engelmann2021,Engelmann2020c}.
	Observe that $S_i$ and $s_i$ are constructed by multiplying $\nabla F_i^\delta$ with $A_i$ from the left/right. 
	Hence, zero rows in $A_i$ directly carry over to zero rows and columns in $S_i$ and $s_i$.
	As each row of the matrices $A_i$ typically describes coupling between only \emph{two} subsystems, these rows are zero in most $A_i$ except for those two subsystems that are coupled to each other---i.e., except for the two neighboring subsystems.
	As a result, quantities related to one specific row of $A$ have to be communicated \emph{only} between the two subsystems assigned to this row in many iterative inner algorithms.
	Hence we introduce the set of consensus constraints assigned to subsystem $i\in \mathcal{S}$,
	\begin{align} \label{eq:Cset}
	\mathcal{C}_i = \{j \in \{1,\dots n_c\} \; | \; [A_i]_j \neq 0\},
	\end{align}
	where $[\,A\,]_j$ denotes the $j$th row of $A$.
	Thus, we can define the neighbors of subsystem $i\in \mathcal{S}$ by $\mathcal{N}_i \doteq \{ j\in \mathcal{S} \; |\; \mathcal{C}_i \, \cap \, \mathcal{C}_j \neq \emptyset \}$.
	For the sake of completeness, the appendix recalls an essentially decentralized conjugate gradient method from  \cite{Engelmann2021}, which exchanges information mainly between neighbors, and allows solving \eqref{eq:schurComp}.

	\subsection{The essentially decentralized interior point method}
	
	\begin{algorithm}[t]
		\caption{d-IP for  solving  \eqref{eq:sepForm}}
		\begin{algorithmic}[1]
			\State Initialization: $p_i^0$ for all $i \in \mathcal{S}$, $\delta^0,\lambda^0$, $\epsilon$ \label{stp:1}
			\While{$\|F^\delta(p^k)\|_\infty > \epsilon$} 
			\State compute $(S_i^k,s_i^k)$ locally via \eqref{eq:Schur}  \label{stp:3}
			\While{$\|r_i^\lambda\|_\infty > c_1 (\delta^k)^{\eta}$ for all $i \in \mathcal{S}$} \label{stp:4}
			\State iterate \eqref{eq:schurComp} via an (ess.)  decentralized  algorithm \label{stp:5}
			\EndWhile\label{euclidendwhile}
			\State compute $(\delta_i^k,\alpha_i^{p,k},\alpha_i^{d,k})$ locally via \eqref{eq:barrierUp} and \eqref{eq:stepUp}  \label{stp:7}
			\State compute  $(\delta^k,\alpha^{p,k},\alpha^{d,k})$ centrally\label{stp:8}
			\State compute $(\Delta p_i^k,p^{k+1}_i)$ locally via \eqref{eq:delP} and \eqref{eq:primDuUp}   \label{stp:9}
			\State $k \rightarrow k+1$
			\EndWhile\label{euclidendwhile}
			\State \textbf{return} $p^\star$
		\end{algorithmic} \label{alg:d-IP}
	\end{algorithm} 
	
	The  essentially decentralized interior point (d-IP) algorithm is summarized in Algorithm~\ref{alg:d-IP}.
	Here, superscripts $(\cdot)^k$ denote the iteration index. 
	After initializing d-IP in line~\autoref{stp:1}, each subsystem $i\in \mathcal{S}$ computes its Schur-complements $(S_i^k,s_i^k)$ locally based on \eqref{eq:Schur} in line~\autoref{stp:3}.
	In line~\autoref{stp:4}, a decentralized inner algorithm solves \eqref{eq:schurComp}. Possible inner algorithms are d-CG or decentralized ADMM (d-ADMM) from \cite{Engelmann2021}.
	The inner algorithms typically achieve a finite precision  only.\footnote{Note that in case of d-CG, although being an iterative algorithm, an exact solution can be achieved in a finite number of iterations \cite{Engelmann2021}.} 
	To quantify this inexactness, we introduce the residual 
	\begin{align} \label{eq:redSystem}
	r^{\lambda,k} = 
	S^k \Delta \lambda^k-s^k .
	\end{align}
	As we will see in \autoref{sec:locConv}, solving \eqref{eq:schurComp} up to precision
	\begin{align}  \label{eq:termCondInner}
	\|r^{\lambda,k}\|_\infty \leq c_1 \left (\delta^{k} \right )^{\eta}, \quad \eta > 1
	\end{align}
	guarantees fast local convergence.
	Note that since \eqref{eq:termCondInner} uses the infinity norm, the inequality can be evaluated locally if each subsystem computes local components of $r^{\lambda,k}$, which we denote by $r^{\lambda,k}_i$. 
	
	Line~\autoref{stp:7} of Algorithm~\ref{alg:d-IP} updates the barrier parameter 
	\begin{align} \label{eq:barrierUp}
	\delta^{k+1} = \underset{i\in \mathcal{S}}{\max}\; \delta_i^{k+1} \;\; \text{with}\quad \delta_i^{k+1} =\theta \left  (\frac{v_i^{k\top} \mu_i^{k\phantom{\top}}}{n_{hi}} \right )^{1+\gamma }
	\end{align}
	for parameters $\gamma > 0$, and $\theta$ close to $1$.
	Moreover, line~\autoref{stp:9} computes step updates locally via
	\begin{subequations} \label{eq:primDuUp}
		\begin{align} 
		x^{k+1}_i &= x_i^k + \alpha^p \Delta x_i^k, \qquad & \mu^{k+1}_i &= \mu_i^k + \alpha^d \Delta \mu_i^k, \\
		s^{k+1}_i &= s_i^k + \alpha^p \Delta s_i^k, \qquad & \gamma^{k+1}_i &= \gamma_i^k + \alpha^d \Delta \gamma_i^k.
		\end{align} 
	\end{subequations}
The stepsizes are computed by $\alpha^{p,k} = \underset{i \in \mathcal{S} }{\min  }\, \alpha_i^{p,k}$, $\alpha^{d,k} = \underset{i \in \mathcal{S} }{\min  }\, \alpha_i^{d,k}$, where
	\begin{subequations}\label{eq:stepUp}
		\begin{align}
		\alpha^{p,k}_i &= \min \left ( {\tau^k} \min_{\Delta [s_i^k]_n < 0} \left ( -\frac{[v_i^k]_n}{\Delta [v_i^k]_n} \right) ,1\right ) , \\
		\alpha^{d,k}_i &=\min \left ( {\tau^k} \min_{\Delta [\mu_i^k]_n < 0} \left ( -\frac{[\mu_i^k]_n}{\Delta [\mu_i^k]_n} \right),1\right ),
		\end{align}
	\end{subequations}
and $\tau^k = 1-(\delta^k)^{\beta}$ with $ \beta > \gamma$.

	\begin{rem}[Global communication] \label{rem:com}
			Note that for computing $\min/\max$ in \eqref{eq:barrierUp} and \eqref{eq:stepUp}, each subsystem communicates three floats per  d-IP iteration. 
			This can be implemented via broadcasting protocols, where each subsystem receives all $(\alpha_i^{p,k}, \alpha_i^{d,k},\delta_i^k)$ and then calculates the $\min/\max$ locally. 
			Hence, we refer to the proposed algorithm as being \emph{essentially} decentralized instead of fully decentralized.    \hfill $\square$
	\end{rem}

	\section{Local convergence} \label{sec:locConv}
	Next, we analyze the local convergence properties of Algorithm~\ref{alg:d-IP}.
	To this end, we make the following assumption~\cite{Byrd1998}.
	\begin{ass}[Regularity of the local minimizer] \hfill \label{ass:basic}
		\begin{itemize} 
			\item The quadruple $p^\star=(x^\star,v^\star,\gamma^\star,\mu^\star)$ is a KKT point of~\eqref{eq:sepForm}, i.e. $F^0(z^\star)=0$, $\mu^\star \geq 0$, and $v^\star\geq 0$;
			\item the Hessian $\nabla_{xx}L(p)$ exists and is  Lipschitz at $p^\star$;
			\item LICQ\footnote{LICQ $=$ Linear Independence Constraint Qualification} holds  at $p^\star$;
			\item   $\nabla_{xx}L$ is positive definite (a strict version of SOSC\footnote{SOSC $=$ Second-Order Sufficient Condition});
			\item strict complementarity holds, i.e. $v^\star  + \mu^\star > 0 $.
		\end{itemize}
	\end{ass}
	The convergence analysis first considers the progress of the iterates towards a solution to the barrier problem~\eqref{eq:slackReform}, when performing an inexact Newton step via \eqref{eq:structKKT}. Second, we combine this estimate with a barrier parameter update. This way, we ensure that a solution to problem~\eqref{eq:slackReform} coincides with a solution to problem \eqref{eq:sepForm} in the limit.
	We start by recalling a result from \cite{Byrd1998}, which characterizes the progress of Algorithm~\ref{alg:d-IP} towards a local solution to  barrier problem~\eqref{eq:slackReform}.
	\begin{thm}[{Distance to barrier  solution, \cite[Thm. 2.3]{Byrd1998}}] \label{thm:distToBarr}
		Suppose  \autoref{ass:basic} holds, let $p^k$ be an iterate close  to $p^\star(\delta^k)$, where $p^\star (\delta^k)$ denotes a primal-dual solution to the barrier problem \eqref{eq:slackReform}.
		Then, there exists a $\bar \delta$ such that for all $\delta < \bar \delta$
		\begin{align*}
		\|p^{k+1}-p^\star(\delta)\| \leq KL\|p^k-p^\star(\delta)\|^2 + K\|r^{\lambda,k}\|
		\end{align*}
			with $K=\|\nabla F^{0}(p^k)^{-1}\|$ and where $\|\cdot \|$ refers to the Euclidean norm.
			Here, $L$ is a constant satisfying
			\begin{align*}
			\|\nabla F^0(p) (p-p') -F^0(p) +  F^0(p')\| \leq L \|p-p'\|^2
			\end{align*} 
			for all $p,p'$ close enough to $p^\star$.
			\hfill $\square$
	\end{thm}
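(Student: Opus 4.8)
The plan is to establish the quadratic-plus-residual bound by interpreting the inexact Newton step as a perturbed exact Newton step applied to the nonlinear map $F^\delta$, and then controlling both the linearization error and the inexactness separately. First I would write down the inexact Newton update implied by Algorithm~\ref{alg:d-IP}: solving \eqref{eq:schurComp} to residual $r^{\lambda,k}$ and backsubstituting via \eqref{eq:delP} is algebraically equivalent to solving the full saddle-point system \eqref{eq:structKKT} up to a right-hand-side perturbation supported on the $\lambda$-block, namely $\nabla F^\delta(p^k)\,\Delta p^k = -F^\delta(p^k) + \rho^k$, where $\rho^k$ collects the inexactness and satisfies $\|\rho^k\|\le c\,\|r^{\lambda,k}\|$ for some constant $c$ depending on the local block inverses. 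This reduction is the conceptual heart: the Schur complement does not change the solution of the linear system, so the only error introduced by the inner algorithm is the failure to drive the reduced residual to zero, and that error maps back into the full system in a controlled way.

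Next I would derive the standard inexact-Newton identity. Writing $e^{k+1}=p^{k+1}-p^\star(\delta)$ and $e^k=p^k-p^\star(\delta)$, and using $F^\delta(p^\star(\delta))=0$ together with $p^{k+1}=p^k+\Delta p^k$, one obtains
\begin{align*}
	\nabla F^\delta(p^k)\,e^{k+1}
	= \nabla F^\delta(p^k)\,e^k - F^\delta(p^k) + F^\delta(p^\star(\delta)) + \rho^k.
\end{align*}
The first three terms on the right are exactly the quantity bounded by the Lipschitz-type constant $L$ in the theorem statement (with $p=p^k$, $p'=p^\star(\delta)$), so their norm is at most $L\|e^k\|^2$. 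I would then left-multiply by $\nabla F^\delta(p^k)^{-1}$ and take norms, giving
\begin{align*}
	\|e^{k+1}\| \le \|\nabla F^\delta(p^k)^{-1}\|\big(L\|e^k\|^2 + \|\rho^k\|\big).
\end{align*}
Identifying the operator norm with $K$ and absorbing the constant $c$ from the residual bound into the residual term (or folding it into the reported constant) yields the claimed inequality. The role of the assumptions is to make $K$ finite and uniformly bounded near $p^\star$: LICQ, strict SOSC, and strict complementarity guarantee that the barrier KKT Jacobian stays nonsingular for small $\delta$, which also justifies the temporary invertibility of each $\nabla F_i^\delta$ used to form the Schur complement in the first place.

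The main obstacle I anticipate is making the constants genuinely uniform in $\delta$, rather than merely finite for each fixed $\delta$. As $\delta\to 0$ the blocks $-V_i^{-1}M_i$ degenerate because some slacks and multipliers tend to zero, so a naive bound on $\|\nabla F^\delta(p^k)^{-1}\|$ could blow up. The existence of the threshold $\bar\delta$ in the statement signals precisely that one must show the relevant Jacobian inverse remains bounded uniformly for $\delta<\bar\delta$ and for $p^k$ in a neighborhood of the central path. Since this theorem is quoted directly as \cite[Thm.~2.3]{Byrd1998}, I would lean on that reference for the delicate uniform-boundedness and central-path regularity estimates, and focus my own contribution on the reduction step: verifying that the decentralized Schur-complement solve produces exactly the perturbation structure $\rho^k$ with $\|\rho^k\|$ proportional to $\|r^{\lambda,k}\|$, so that the inexact-Newton machinery of \cite{Byrd1998} applies verbatim. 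The remaining pieces — the Lipschitz estimate and the norm manipulation — are routine once the perturbation is identified.
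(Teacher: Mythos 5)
Your proposal follows essentially the same route as the paper: the paper does not prove this theorem itself but quotes \cite[Thm.~2.3]{Byrd1998} for the inexact-Newton estimate, and its only added content is precisely your reduction step, namely that solving \eqref{eq:schurComp} inexactly and backsubstituting via \eqref{eq:delP} perturbs the full system \eqref{eq:structKKT} only in the $\lambda$-block. One sharpening you should make: because \eqref{eq:delP} renders the first $|\mathcal{S}|$ block-rows of \eqref{eq:structKKT} exact, the full-system residual is exactly $(0,\dots,0,-r^{\lambda,k})$, so your constant $c$ equals $1$ and does not depend on the local block inverses --- this is what yields the bound with the stated constant $K$ rather than a weakened $cK$, and it is exactly the content of the paper's remark that the two residuals ``have the same value.''
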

	We remark that the constant $L$ is guaranteed to exist by \autoref{ass:basic}, cf. \cite[Lem. 2.1]{Byrd1998}.
	
	\begin{rem}[Residual to \eqref{eq:structKKT} vs. residual  \eqref{eq:redSystem}]
			Note that in~\cite{Byrd1998}, the authors use the full residual corresponding to~\eqref{eq:structKKT} instead of $r^{\lambda,k}$ corresponding to \eqref{eq:redSystem}. 
			However, both residuals have the same value since \eqref{eq:delP} and \eqref{eq:SchurComp} ensure that the first $|\mathcal{S}|$ block-rows in \eqref{eq:structKKT} have residual zero.  \hfill $\square$
		\end{rem}

	Next, we combine \autoref{thm:distToBarr} with an update rule decreasing the barrier parameter $\delta$ similar to \cite[Thm. 2.3]{Byrd1998}.
	\begin{thm}[Local superlinear convergence.] \label{thm:supConv}
		Suppose \autoref{ass:basic} holds and let $p^k$ be an iterate sufficiently close to $p^\star$.
		Moreover, suppose that the residual $r^{\lambda,k}$ satisfies \eqref{eq:termCondInner}	for some positive constant $c_1$, suppose that $\delta^{k+1} = o(\|F^0(p^k)\|)$, and suppose that  \eqref{eq:stepUp} is satisfied with $\alpha^{p,k}=\alpha^{d,k}=1$ for all subsequent $k$.
		Then, the iterates generated by Algorithm~\ref{alg:d-IP} converge  to $p^\star$ locally at a superlinear rate. \hfill $\square$
	\end{thm}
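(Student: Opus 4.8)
The plan is to control the distance $e^k \doteq \|p^k-p^\star\|$ to the true KKT point by inserting the barrier solution $p^\star(\delta^{k+1})$ as an intermediate reference and balancing three error sources: the inexact-Newton contraction toward $p^\star(\delta^{k+1})$ furnished by \autoref{thm:distToBarr}, the inner residual $r^{\lambda,k}$, and the central-path gap $\|p^\star(\delta^{k+1})-p^\star\|$. The barrier parameter steering the step that produces $p^{k+1}$ is the reduced value $\delta^{k+1}$, so $p^{k+1}$ is an inexact Newton iterate for $F^{\delta^{k+1}}(p)=0$ and therefore approaches $p^\star(\delta^{k+1})$; with $\alpha^{p,k}=\alpha^{d,k}=1$ the update \eqref{eq:primDuUp} is exactly this full Newton step, and the fraction-to-boundary rule \eqref{eq:stepUp} with $\tau^k=1-(\delta^k)^\beta\to 1$ keeps the step admissible near the strictly complementary point of \autoref{ass:basic}.

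First I would record two standard consequences of \autoref{ass:basic}. Since $F^0(p^\star)=0$ and LICQ, strict complementarity $v^\star+\mu^\star>0$ and positive definiteness of $\nabla_{xx}L$ render $\nabla F^0(p^\star)$ nonsingular, a first-order expansion of $F^0$ about $p^\star$ yields constants $0<c_-\le c_+$ with $c_- e^k \le \|F^0(p^k)\| \le c_+ e^k$ on a neighbourhood of $p^\star$; on this neighbourhood $K=\|\nabla F^0(p^k)^{-1}\|$ and the constant $L$ of \autoref{thm:distToBarr} are uniformly bounded. Applying the implicit function theorem to $F^\delta(p)=0$ at $(\delta,p)=(0,p^\star)$ --- again using nonsingularity of $\nabla F^0(p^\star)$ and $v^\star+\mu^\star>0$ to keep the path interior --- shows that $\delta\mapsto p^\star(\delta)$ is Lipschitz at $\delta=0$, so $\|p^\star(\delta)-p^\star\|\le C\,\delta$ for small $\delta\ge 0$.

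Next I would combine these estimates. Because $p^k$ is close to $p^\star$ it is also close to $p^\star(\delta^{k+1})$, so \autoref{thm:distToBarr} applies with $\delta=\delta^{k+1}$, and the triangle inequality gives
\begin{align*}
e^{k+1} &\le \|p^{k+1}-p^\star(\delta^{k+1})\| + \|p^\star(\delta^{k+1})-p^\star\| \\
&\le KL\,\|p^k-p^\star(\delta^{k+1})\|^2 + K\,\|r^{\lambda,k}\| + C\,\delta^{k+1}.
\end{align*}
Using $\|p^k-p^\star(\delta^{k+1})\| \le e^k + C\,\delta^{k+1}$, the inner termination criterion \eqref{eq:termCondInner} which bounds $\|r^{\lambda,k}\|$ by a multiple of $(\delta^{k+1})^\eta$ with $\eta>1$, and the hypothesis $\delta^{k+1}=o(\|F^0(p^k)\|)$ together with $\|F^0(p^k)\|\le c_+ e^k$, I obtain $\delta^{k+1}=o(e^k)$. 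Consequently the path gap $C\delta^{k+1}=o(e^k)$, the quadratic term is $O((e^k)^2)=o(e^k)$, and the residual term is $O((\delta^{k+1})^\eta)=o((e^k)^\eta)=o(e^k)$ because $(e^k)^{\eta-1}\to 0$. Collecting, $e^{k+1}=o(e^k)$, which is the claimed superlinear rate; a standard bootstrap --- shrinking the neighbourhood so that the coefficient multiplying $e^k$ stays below one keeps the iterates trapped and forces $e^k\to 0$ --- promotes this one-step estimate to convergence.

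The main obstacle I anticipate is the bookkeeping that simultaneously drives every error contribution to be $o(e^k)$. The central-path gap is only first order in $\delta^{k+1}$, the residual is of order $(\delta^{k+1})^\eta$ with $\eta$ possibly close to $1$, and the Newton contraction is quadratic in $e^k$; making all three subordinate to $e^k$ rests delicately on the update rule tying $\delta^{k+1}$ to $\|F^0(p^k)\|\asymp e^k$ and on the uniform boundedness of $K$, $L$ and $C$ over a fixed neighbourhood, which is precisely where \autoref{ass:basic} --- especially LICQ, SOSC and strict complementarity --- enters decisively.
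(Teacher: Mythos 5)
Your proposal is correct and follows essentially the same route as the paper's proof: insert the barrier solution $p^\star(\delta^{k+1})$ via the triangle inequality, invoke \autoref{thm:distToBarr} for the inexact-Newton contraction, bound the central-path gap by $C\delta^{k+1}$ through the implicit function theorem, and convert the hypothesis $\delta^{k+1}=o(\|F^0(p^k)\|)$ into $o(\|p^k-p^\star\|)$ using Lipschitz continuity of $F^0$ and $F^0(p^\star)=0$. The only cosmetic differences are that you phrase the conclusion directly as $e^{k+1}=o(e^k)$ rather than dividing the inequality by $\|p^k-p^\star\|$ as the paper does, and you add an explicit bootstrap for convergence, which the paper leaves implicit.
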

	\begin{proof}
		Let  $p^\star$ denote a primal-dual solution to~\eqref{eq:sepForm}.
		Define $\tilde F(p;\delta)\doteq F^\delta(p)$.
			Observe that \autoref{ass:basic} implies that the implicit function theorem can be applied to $\tilde F(p;\delta)=0$ at $(p^\star,0)$~\cite[Lem. 2.2]{Byrd1998}.
			Thus, for all $\delta < \bar \delta$ there exists a constant  $C=\underset{p \in \mathcal{N}(p^\star)}{\max} \|{\nabla_p \tilde F(p;\delta)}^{-1}  \nabla_\delta \tilde  F(p;\delta)\|$, for which
			$ 
			\|p^\star(\delta)-p^\star\| \leq C \delta.
			$
		Combining this with the triangular inequality, \autoref{thm:distToBarr},  and $v^{k+1},\mu^{k+1} > 0$ we obtain
		\begin{align*}
		\|p^{k+1} \hspace{-.9mm}-p^\star\| \hspace{-.9mm}&\leq \|p^{k+1}-p^\star(\delta^{k+1})\| + \|p^\star(\delta^{k+1})-p^\star\|\\
		&\leq  KL\|p^k-p^\star(\delta^{k+1})\|^2 + K\|r^{\lambda,k}\| + C\delta^{k+1} \\
		& \leq  KL(\|p^k-p^\star\|+\|p^\star\hspace{-.9mm}-p^\star(\delta^{k+1})\|)^2\\
		&\phantom{=} + (Kc_1(\delta^{k+1})^{\eta-1} \hspace{-.5mm} +  C)\delta^{k+1} \\
		& \leq  2KL(\|p^k-p^\star\|^2 \hspace{-.9mm}+ \hspace{-.9mm} \|p^\star \hspace{-.9mm}-p^\star(\delta^{k+1})\|^2) \\
		& \phantom{=}+ (Kc_1(\delta^{k+1})^{\eta-1} \hspace{-.5mm} + \hspace{-.5mm} C)\delta^{k+1} \\
		& \leq  2KL\|p^k-p^\star\|^2 \\
		& \phantom{=}+(2KLC^2\delta^{k+1}  + Kc_1 (\delta^{k+1})^{\eta-1} + C)\delta^{k+1}.
		\end{align*}
		Dividing by $\|p^k-p^\star\|$, using that $F^0$ is Lipschitz with constant $P$ by \autoref{ass:basic},  and leveraging that $F^0(p^\star)=0$, we obtain
		\begin{align*}
		\frac{\|p^{k+1}\hspace{-.9mm}-p^\star\|}{\|p^k-p^\star\|} & \leq  2KL\|p^k-p^\star\|\\
		& \hspace{-6mm}+(2KLC^2\delta^{k+1} + Kc_1(\delta^{k+1})^{\eta-1} + C)\frac{\delta^{k+1}}{\|p^k-p^\star\|} \\
		& \leq  2KL\|p^k-p^\star\| \\
		& \hspace{-6mm}+P(2KLC^2\delta^{k+1} \hspace{-1mm} + \hspace{-.5mm} Kc_1(\delta^{k+1})^{\eta-1} \hspace{-1mm} +\hspace{-.5mm} C)\frac{\delta^{k+1}}{\|F^0(p^k)\|}. 
		\end{align*}
		This shows superlinear convergence if $\delta^{k+1}=o(\|F^0(p^k)\|)$.
	\end{proof}
	
	\begin{rem}[Satisfying \eqref{eq:stepUp} automatically]
At first glance it is not clear how to ensure that $\alpha_i^{p,k}=\alpha_i^{d,k}=1$ in \autoref{thm:supConv}. 
However, with the given update rules for $\delta$ and $\tau$ it can be shown that  \eqref{eq:stepUp} is satisfied automatically in the area of local convergence and thus does not interfere with the convergence analysis given here \cite[Sec. 5]{Byrd1998}.
	\end{rem}

	\begin{rem}[Solving \eqref{eq:schurComp} via decentralized algorithms]
		Note that Algorithm~\ref{alg:d-IP} is guaranteed to converge with \emph{any} inner algorithm solving \eqref{eq:schurComp} as long as \eqref{eq:termCondInner} is satisfied.
		Hence, one may apply essentially decentralized conjugate gradients  from \cite{Engelmann2021} or any other decentralized algorithm. In essence, this shows that the proposed algorithms is closely related to the concept of bi-level distribution of optimization algorithms introduced in \cite{Engelmann2020c}. \hfill $\square$
	\end{rem}
	
	\subsection{Updating the barrier parameter}
	Next, we show that the barrier parameter \eqref{eq:barrierUp} satisfies $\delta^{k+1} = o(\|F^0(p^k)\|)$ such that we can apply \autoref{thm:supConv} to ensure local superlinear convergence.
		The line search \eqref{eq:stepUp} ensures $v^k,\mu^k>0$, and the definition of $F^0(p^k)$ implies $\|V^k\mu^k\| \leq \|F^0(p^k)\|$. 
		Combining this with \eqref{eq:barrierUp} yields 
		\begin{equation*}
		\frac{\delta^{k+1}}{\|F^0(p^k)\|} \leq \frac{ \max_{i \in \mathcal{S}}\theta (v_i^{k\top} \mu_i^k / n_{hi})^{1+\gamma}}{\|V^k\mu^k\|}.
		\end{equation*} 
	From the equivalence of norms it follows that
		\begin{equation*}
		\frac{\delta^{k+1}}{\|F^0(p^k)\|} \leq \frac{\theta \|V^k\mu^k\|_1^{1+\gamma}}{\min_{i\in \mathcal S}(n_{hi})^{1+\gamma} \|V^k\mu^k\|} \leq
		D\|V^k\mu^k\|^\gamma,
		\end{equation*} 
	for some  $D>0$. Since $\|V^k\mu^k\|^\gamma \rightarrow 0$ as $\|F^0(p^k)\| \rightarrow 0$, it follows that ${\delta^{k+1} = o(\|F^0(p^k)\|)}$ and we obtain local superlinear convergence.
	
	\section{A numerical case study} \label{sec:NumRes}

	\subsection{Distributed Optimal Power Flow}
	Optimal Power Flow (OPF) problems compute   generator set points in  electrical grids such that the total cost of power generation is minimized while all technical limits are met~\cite{Frank2016, Faulwasser2018}. 
	Distributed approaches are particularly important for OPF due to large grid dimensions and  due to the need for a limited information exchange between grid operators \cite{Molzahn2017}.
	
	The basic OPF problem reads
	\begin{subequations}\label{eq:OPF}
		\begin{align} 
		&\min_{s,v \in \mathbb{C}^{N}} \;f(s) \\
		\text{subject to} \quad s-&s^d  = \operatorname{diag} (v) Y v^*,  \label{eq:PFeq} \\
		\underline p \leq \operatorname{re}(s) &\leq \bar p, \quad
		\underline q \leq \operatorname{im}(s) \leq \bar q, \label{eq:pwBounds}\\
		\underline v \leq \operatorname{abs}(v) &\leq \bar v, \quad v^1 = v^s, \label{eq:vBounds}
		\end{align}
	\end{subequations}
	where the decision variables are complex voltages $v \in \mathbb{C}^N$ and complex power injections $s \in \mathbb{C}^N$  at all buses $N$.
	The objective function $f:\mathbb{C}^N\rightarrow \mathbb{R}$ describes the cost of power generation, which is typically quadratic in the active power injections $\operatorname{re}(s)$.
	Here, $\operatorname{re(\cdot)}$ and $\operatorname{im(\cdot)}$ denote the real part and imaginary part of a complex number; $(\cdot)^*$ denotes the complex conjugate.
	The grid physics are described via the power flow equations \eqref{eq:PFeq}, where $Y \in \mathbb{C}^{N\times N}$ is the complex bus-admittance matrix containing   topology and parameter information~\cite{Frank2016}. 
	Moreover, $s^d \in \mathbb{C}^N$ is the  given power demand at all buses. 
	The bounds \eqref{eq:pwBounds} describe technical limits on the power injection by generators and \eqref{eq:vBounds} models voltage limits.
	The second equation in \eqref{eq:vBounds} is a reference condition on the voltage at the first bus, $v^1$, where the complex voltage is constrained to a reference value~$v^s$. 
	
	There are various variants of the OPF problem differing in the choice of the coordinates (polar vs. rectangular), in the complexity of the equipment (with/without shunts/transformers), and in the cost functions.
	Here, we neglect line shunts and we use the polar form of the power flow equations.
	For details we refer to \cite{Frank2016,Molzahn2017}.
	Moreover, there are different ways of formulating \eqref{eq:OPF} to meet \eqref{eq:sepForm}---here we use the approach described in \cite{Engelmann2019}.
	As a numerical example, we use the IEEE 118-bus test system with data from  MATPOWER  \cite{Zimmerman2011}.
	The problem has a total number of $n_x=576$ decision variables, $n_g=470$ non-linear equality constraints and $n_h=792$ inequality constraints.

	\begin{figure}
		\includegraphics[trim=25 40 40 40,clip,width=\linewidth]{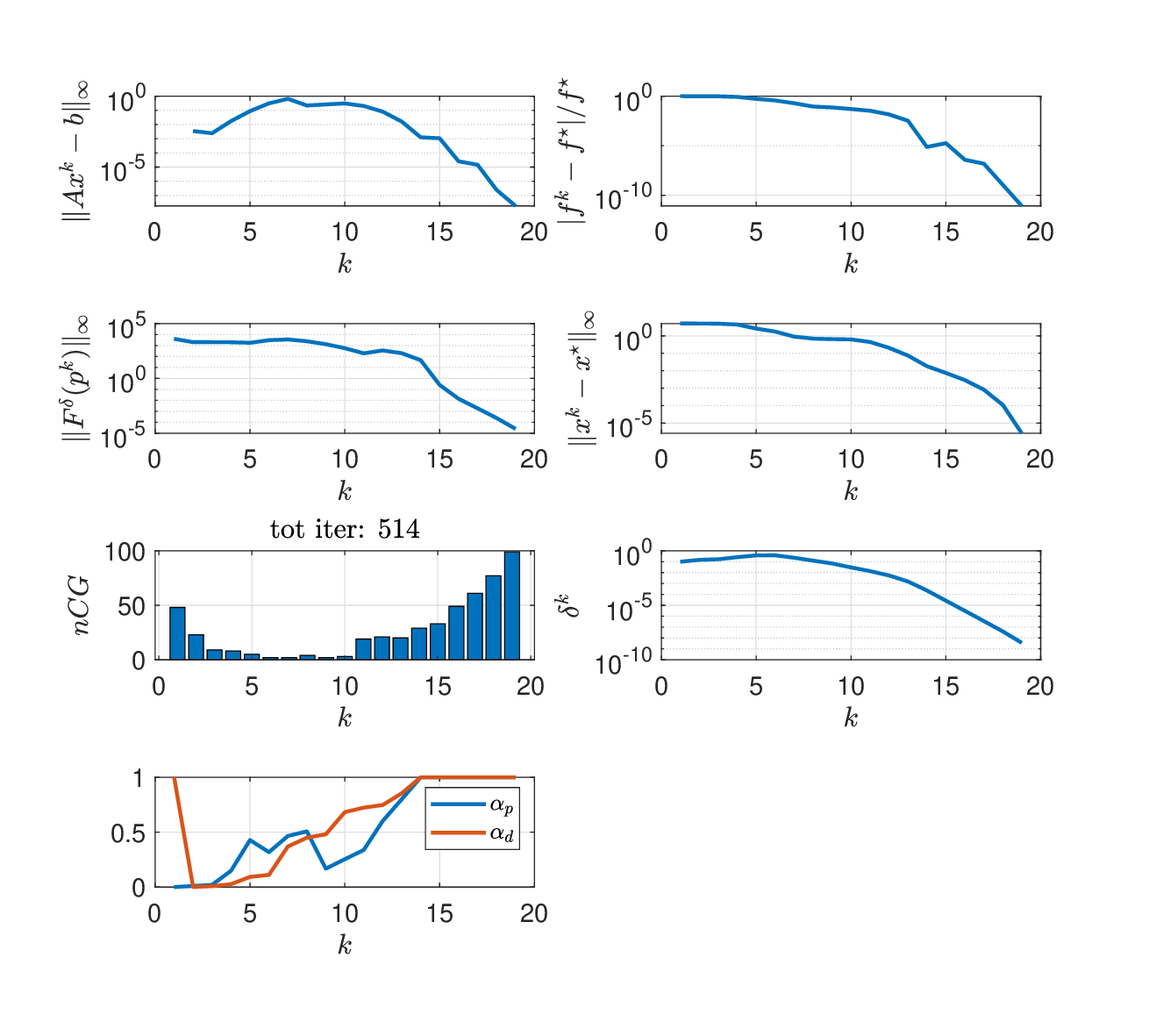}
		\caption{Convergence behavior of Algortihm~\ref{alg:d-IP} for  118-bus OPF.}
		\label{fig:OPF118bus}
	\end{figure}
	
	\subsection{Numerical results}
	We present numerical results for a prototypical implementation in MATLAB R2020b.
	We compare the numerical results of Algorithm~\ref{alg:d-IP} to ADMM, which is one of the most popular distributed optimization methods for OPF \cite{Erseghe2014,Guo2017}.
	We use the essentially decentralized conjugate gradient (d-CG) method from~\cite{Engelmann2021} as an inner algorithm.
	Since d-CG requires that $S^k$ is positive definite, we show that this is the case if \autoref{ass:basic} holds in Appendix~\ref{sec:posDefProof}. Moreover, we describe d-CG in Appendix~\ref{sec:DCG}.
	
	We distinguish between inner iterations, solving \eqref{eq:schurComp} via d-CG until \eqref{eq:termCondInner} is satisfied, and outer iterations, where the barrier parameter $\delta$, the stepsizes $(\alpha^p,\alpha^d)$, and the Schur complements $(S_i,s_i)$ are updated in all subsystems $i \in \mathcal{S}$. 
	
	\autoref{fig:OPF118bus} shows the convergence behavior of Algorithm~\ref{alg:d-IP} for the parameters $(c_1,\theta,\gamma,\beta,\eta)=(1,\,0.1,\,0.01,\,2,1.01)$.
	More specifically, it depicts the consensus violation $\|Ax^k-b\|_\infty$---i.e. the maximum mismatch of physical values at boundaries between subsystems---and the relative error in the objective $|f^k-f^\star|/f^\star$.
	Furthermore, \autoref{fig:OPF118bus} illustrates the error in the stationarity condition from \eqref{eq:KKTcond}, the distance to the minimizer $\|x^k-x^\star\|_\infty$, the number of inner iterations of d-CG, the barrier parameter sequence $\{\delta^k\}$, and the primal and dual step size $(\alpha^p,\alpha^d)$ from \eqref{eq:stepUp}.
	The bottom-left plot shows the stepsizes over the iteration index $k$.
	Here, one can see that the iterates enter the region of  local convergence after 14 outer iterations, after which only full steps are taken and superlinear convergence can be observed.
	The number of inner d-CG iterations required for satisfying \eqref{eq:termCondInner} vary greatly and more iterations are needed as d-IP approaches a local minimizer.
	This is due to the fact that $\delta^k$ approaches zero and thus high accuracies in \eqref{eq:schurComp} are required.

	\autoref{fig:OPF118ADMM} compares the results of Algorithm~\ref{alg:d-IP} to an ADMM implementation and to bi-level ALADIN from \cite{Engelmann2020c}.
	We use bi-level ALADIN with d-CG as an inner algorithm and a fixed amount of 50 inner iterations, which is the minimum amount of iterations required for convergence in this problem.
	One can observe that when counting outer iterations only (yellow line), d-IP is much faster compared to ADMM (blue lines).
	However, when counting inner d-CG iterations (orange line), d-IP requires about 400 and ADMM requires about 300  iterations for reaching an accuracy of $\|x^k-x^\star\|_\infty< 10^{-4}$.
	In our prototypical implementation, d-IP requires $7.6\,$s for solving \eqref{eq:OPF} to the above accuracy, whereas ADMM requires $39.9\,$s in the best-tuned case and bi-level ALADIN requires $16.1\,$s. 
	This is due to the fact that ADMM requires solving about 300 NLPs, whereas d-IP only requires to perform  15 matrix inversions for the outer iterations.
	Hence, the complexity per iteration of d-IP is much smaller compared to ADMM, which results in the reduced execution time.
	Bi-level ALADIN has the same complexity per iteration as ADMM. However, due to the smaller amount of outer iterations, the total computation time is much smaller than for ADMM but still larger than for d-IP.
	Moreover, d-IP and bi-level ALADIN are guaranteed to converge locally, whereas ADMM is not due to the non-convex constraints~\eqref{eq:PFeq}.
	An advantage of ADMM is its full decentralization.

	\begin{figure}
		\includegraphics[trim=30 20 50 15,clip,width=\linewidth]{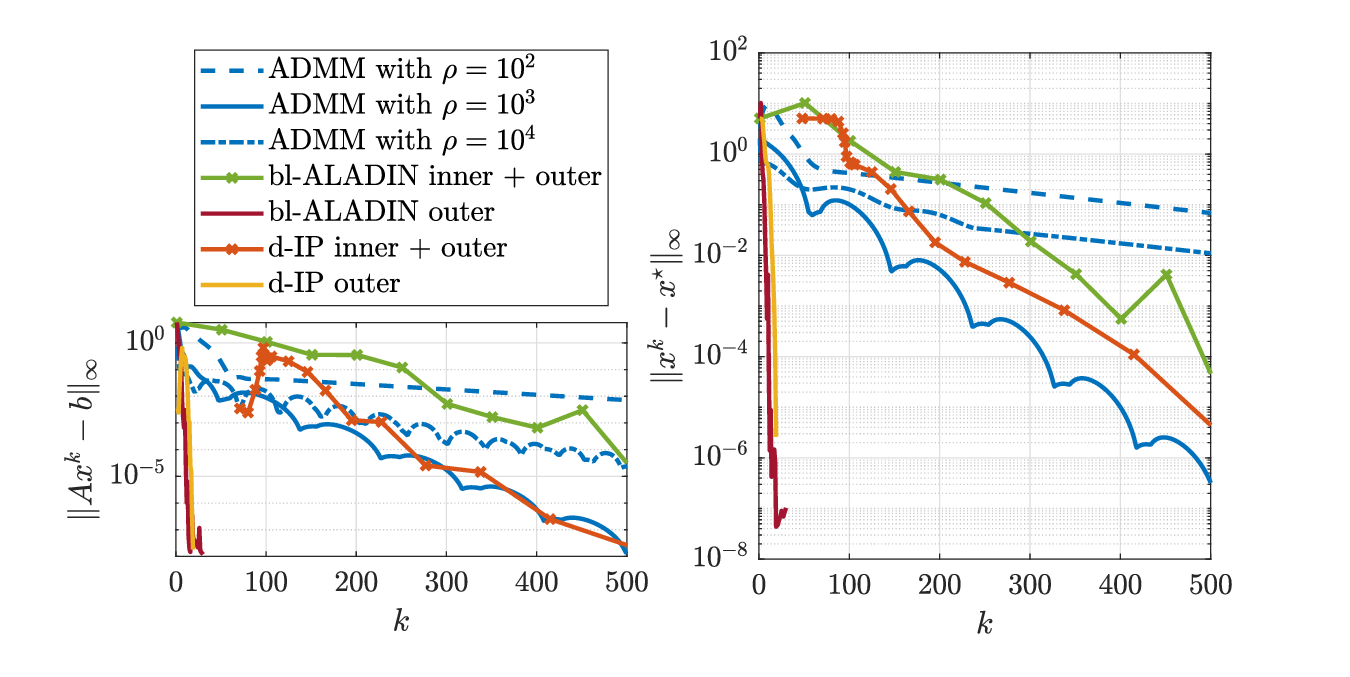}
		\caption{ADMM, d-IP, and bi-level ALADIN for 118-bus OPF.}
		\label{fig:OPF118ADMM}
	\end{figure}

	\subsection{Communication analysis}
		In contrast to ADMM, d-IP requires global scalar communication to sum $\sigma$ and $\eta$ in d-CG (Algorithm~\ref{alg:d-CG}), and the communication of three additional scalars every outer d-IP iteration for computing the barrier parameter update  \eqref{eq:barrierUp} and the step lengths \eqref{eq:stepUp}.
		However, note that these quantities can be communicated via broadcasting and thus this does not interfere much with decentralization, cf. \autoref{rem:com}.
				A quantitative comparison of the communication demands is omitted due to space limitations---for first results in this direction we refer to \cite{Stomberg2021a}.

	The communication demand and further important properties of d-IP and ADMM are summarized in~\autoref{tab:propIpADM}.
	
	\begin{table}
		\centering
		\footnotesize
		\caption{Properties of d-IP and ADMM for problem  \eqref{eq:sepForm}.}
		\begin{tabular}{rp{2.7cm}lc}
			\toprule
			&	d-IP & ADMM  \\
			\midrule
			conv. guarantees &  yes & no \\
			conv. rate  & superlinear & (sub)linear \\
			local complexity & matrix-vector product / (matrix inversion) & solving a NLP \\
			global comm. & scalar stepsize & none \\
			local comm. & vectors & vectors \\
			\bottomrule
		\end{tabular}
		\label{tab:propIpADM}
	\end{table}
	
	\Id{Add that divergence of ADMM also occurs in practice?}
	
	\Id{Add something that if local complexity is of importance, d-IP is in any case better, if only communication matters, d-IP is comparable in some cases, but in some cases, the communication overhead can be much larger.}

	\Id{Remark that larger OPF also possible?}
	
	\section{Summary and Outlook}
	This paper has presented a novel essentially decentralized interior point method, which is based on neighbor-to-neighbor communication. The proposed method solves partially separable  NLPs with non-convex constraints to local optimality with fast local convergence guarantees.
	We illustrate the efficacy of the method drawing upon an example from power systems.
	Future work will consider constraint preconditioning and regularization techniques to improve the convergence rate of d-CG as an inner algorithm.
	Moreover, more elaborate decentralized globalization routines are of interest.
	Furthermore, weakening Assumption 1 to standard second-order conditions,  providing an open-source implementation and testing d-IP on large-scale problems seems promising.

	\appendix 
	
	\subsection{Essentially Decentralized Conjugate Gradients} \label{sec:DCG}
	For the sake of a self-contained presentation, we recall the essentially decentralized conjugate gradient (d-CG) algorithm  for solving~\eqref{eq:SchurComp} from  \cite{Engelmann2021}.
	Consider unit vectors
	\[
	e_i^\top\doteq (0,\;\dots,\underbrace{1}_{i\text{-th element}},\dots,0) \in \mathbb{R}^{n_c}
	\]
	and define the following projection matrices 
	\begin{align} \label{eq:Idef}
	I_{\mathcal{C}_i}\doteq\left (e_j^\top\right )_{l},\quad l \in \mathcal{C}_i\subseteq \{1,\dots,n_c\}.
	\end{align}
	Here $\mathcal{C}_i$ is from \eqref{eq:Cset} and $(v_i)_{i\in \mathcal{S}}$ denotes the vertical concatenation of vectors $v_i$ indexed by the set $\mathcal{S}$.
	Using these matrices we project the Schur complement matrices and vectors from~\eqref{eq:SchurComp} such that zero rows and columns are eliminated: 
	$
	\hat S_i = I_{\mathcal C_i}^{} S_i I_{\mathcal C_i}^{\top},  \hat {s}_i = I_{\mathcal{C}_i}s_i.
	$
	Moreover, we use $I_{\mathcal{C}_i}$ to construct matrices mapping between the subsystems, i.e. $I_{ij}\doteq I_{\mathcal{C}_i}^{\phantom \top} I_{\mathcal{C}_j}^{\top}$.
	Furthermore, we consider averaging matrices
	\begin{align} \label{eq:LamDef}
	\Lambda_i\doteq I_{\mathcal{C}_i}^{\phantom \top}  \left ( \sum_{j \in \mathcal{S}} I_{\mathcal{C}_j}^{ \top} I_{\mathcal{C}_j}^{\phantom \top} \right ) I_{\mathcal{C}_i}^{ \top}= 
	I_{\mathcal{C}_i}^{\phantom \top} \Lambda  I_{\mathcal{C}_i}^{ \top}.
	\end{align}
	This allows summarizing the essentially decentralized conjugate gradient method in Algorithm~\ref{alg:d-CG}. 
	The upper index $(\cdot)^n$ denotes d-CG iterates.

\begin{algorithm}[t]
	\small 
	\caption{Essentially decentralized CG  for problem \eqref{eq:SchurComp} }
	\textbf{Initialization for all $i \in \mathcal{S}$: $ \lambda^0_i$, $r_i^0=p_i^0= \sum_{j \in \mathcal{N}_i}I_{ij} \hat s_j - \sum_{j \in \mathcal{N}_i} I_{ij}^{\phantom \top} \hat S_j \lambda_j^0$,  $\eta^{0}_i= r_i^{0\top } \Lambda_i^{-1} r_i^0$ and $\eta^{0}= \sum_{i \in \mathcal{S}} \eta_i^{0}$.}\\
	\textbf{Repeat until $r_i^n= 0$ for all $i \in \mathcal{S}$:}
	\begin{subequations}
		\begin{align} 
			&\sigma^{n}_i = p^{n \top }_i \hat S_i p^{n}_i;\;\;  u_i^{n}= \hat S_i p_i^{n}; \hspace{-2.3cm} & \hfill \text{(local)} \label{step:dCG6} \\
			& \textstyle{  \sigma^{n} = \sum_{i \in \mathcal{S}} \sigma_i^{n} } \hspace{-1cm} & \text{ (scalar global sums)} \label{step:dCG1}\\
			&\textstyle{ r^{n+1}_i = r_i^n - \frac{\eta^{n}}{\sigma^{n}} \sum_{j \in \mathcal  N_i} I_{ij} u_j^n}\hspace{-2cm} & \text{ (neighbor-to-neighbor)} \label{step:dCG2} \\
			&  \textstyle{\eta^{n+1}_i= r_i^{n + 1\top } \Lambda_i^{-1} r_i^{n+ 1} } & \text{(local)} \label{step:dCG3}\\
			& \textstyle{\eta^{n+1}= \sum_{i \in \mathcal{S}} \eta_i^{n+1} } \hspace{-1.2cm} &\hfill \text{ (scalar global sum)} \label{step:dCG4} \\
			&\textstyle{p_i^{n+1}= r_i^{n+1} + \frac{\eta^{n+1}}{\eta^n} p_i^n }; \;\; \textstyle{\lambda_i^{n+1} = \lambda_i^n + \frac{\eta^{n}}{\sigma^n} p_i} \hspace{-2cm} &\text{(local)} \label{step:dCG5} \\
			&n \leftarrow n+1 & \notag
		\end{align}
	\end{subequations} \label{alg:d-CG}
	\vspace{-.5cm}
\end{algorithm}

	Observe that Algorithm~\ref{alg:d-CG} comprises basic operations: steps \eqref{step:dCG3}, \eqref{step:dCG5} and \eqref{step:dCG6} are purely local steps, which compute matrix-vector products and scalar divisions.
	In each subsystem $i \in \mathcal{S}$, scalars $(\sigma_i,\eta_i)$ and vectors $(r_i,u_i,\lambda_i,p_i)$ have to be maintained locally.
	\eqref{step:dCG2}~is a decentralized step, which requires neighbor-to-neighbor communication only and can be seen as an average computation between neighbors. 
	The steps \eqref{step:dCG2} and \eqref{step:dCG4} require the communication of one scalar value per subsystem globally.
	Hence, most steps are local and require only neighbor-to-neighbor communication. The two global sums can simply be implemented via broadcasting or a round-robin protocol.
	Thus, Algorithm~\ref{alg:d-CG} is essentially decentralized.
	
	\subsection{$S$ is positive definite under \autoref{ass:basic}} \label{sec:posDefProof}
		\begin{lem}
			Suppose \autoref{ass:basic} holds. Then $S$ from \eqref{eq:schurComp} is positive definite. \hfill $\square$
		\end{lem}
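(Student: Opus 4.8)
The plan is to read $S$ as (minus) the Schur complement of the block-diagonal matrix $\nabla F^\delta = \operatorname{blockdiag}(\nabla F_1^\delta,\dots,\nabla F_{|\mathcal S|}^\delta)$ in the saddle-point matrix of \eqref{eq:structKKT}. Writing $\tilde A = (\tilde A_1,\dots,\tilde A_{|\mathcal S|})$, one has $S = \tilde A (\nabla F^\delta)^{-1}\tilde A^\top = \sum_{i\in\mathcal S} S_i$. Each $\nabla F_i^\delta$ is symmetric, hence so is $S$, and it suffices to show $y^\top S y > 0$ for every $y\neq 0$. I would first extract the ingredients from \autoref{ass:basic}: LICQ makes the rows of $A=(A_1,\dots,A_{|\mathcal S|})$ linearly independent, so $\tilde A^\top$ has full column rank; positive definiteness of $\nabla_{xx}L$ supplies the strict second-order term; and $v,\mu>0$ (maintained by the line search \eqref{eq:stepUp}) makes each diagonal block $\Sigma_i = V_i^{-1}M_i$ positive semidefinite.

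The core step is a quadratic-form identity. Fix $y\neq 0$, set $w = \tilde A^\top y\neq 0$ (supported only on the $x$-blocks), and let $z=(\Delta x,\Delta v,\Delta\gamma,\Delta\mu)=(\nabla F^\delta)^{-1}w$. By symmetry, $y^\top S y = w^\top z = z^\top\nabla F^\delta z$. Expanding blockwise and inserting the defining relations $\nabla F^\delta z = w$---namely $\nabla g\,\Delta x=0$ from the equality rows, and $\Delta v=-\nabla h\,\Delta x$, $\Delta\mu=-\Sigma\,\Delta v$ from the slack rows---all cross terms cancel and one is left with $z^\top\nabla F^\delta z = \Delta x^\top(\nabla_{xx}L + \nabla h^\top\Sigma\,\nabla h)\,\Delta x$ subject to $\nabla g\,\Delta x=0$. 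Since $\nabla_{xx}L\succ 0$ and $\nabla h^\top\Sigma\,\nabla h\succeq 0$, this quantity is nonnegative and vanishes only for $\Delta x=0$.

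It remains to rule out $\Delta x=0$ for $y\neq 0$, which is where I expect the only subtlety. If $\Delta x=0$, then $\Delta v=0$ and $\Delta\mu=0$, and the stationarity row of $\nabla F^\delta z = w$ collapses to $\nabla g^\top\Delta\gamma = A^\top y$, i.e.\ $A^\top y - \nabla g^\top\Delta\gamma = 0$; LICQ makes the rows of $A$ and $\nabla g$ jointly linearly independent, forcing $y=0$, a contradiction. Hence $\Delta x\neq 0$ and $y^\top S y>0$, so $S\succ 0$. The main obstacle is the bookkeeping in the cross-term cancellation---in particular carrying the correct sign of the $\Sigma$-block so that the sum $\nabla_{xx}L + \nabla h^\top\Sigma\,\nabla h$ (rather than a difference) appears---together with invoking LICQ precisely at the step $A^\top y = \nabla g^\top\Delta\gamma$; everything else is routine.
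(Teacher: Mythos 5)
Your proof is correct, and it takes a genuinely different---and in fact more careful---route than the paper's. The paper argues purely algebraically: invertibility of each $\nabla F_i^\delta$ via \cite[Lem.~16.1]{Nocedal2006}, then an eigendecomposition $(\nabla F^\delta)^{-1}=Q\Lambda^{1/2}(\Lambda^{1/2})^\top Q^\top$ so that $S=BB^\top$ with $B=\tilde A Q\Lambda^{1/2}$, which has full row rank by LICQ, whence $S\succ0$ by \cite[Thm~7.2.7]{Horn2013}. You instead evaluate the quadratic form: for $y\neq0$ set $z=(\nabla F^\delta)^{-1}\tilde A^\top y$, use symmetry to write $y^\top S y=z^\top\nabla F^\delta z$, exploit that $\tilde A^\top y$ is supported only on the $x$-rows so that all cross terms cancel, and reduce to $\Delta x^\top(\nabla_{xx}L+\nabla h^\top\Sigma\nabla h)\Delta x$ on $\ker\nabla g$, with LICQ (joint row independence of $\nabla g$ and $A$) handling the degenerate case $\Delta x=0$. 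The structural fact your argument uses, and the paper's does not, is essential: $\nabla F^\delta$ is a saddle-point matrix and is indefinite whenever constraints are present, so $\Lambda$ has negative diagonal entries, no real $\Lambda^{1/2}$ exists, and a factorization $S=BB^\top$ with complex $B$ does not imply definiteness. In general $\tilde A H\tilde A^\top$ need \emph{not} be positive definite for symmetric indefinite invertible $H$ and full-row-rank $\tilde A$; take $H=\operatorname{diag}(1,-1)$ and $\tilde A=\begin{pmatrix}0 & 1\end{pmatrix}$. What rescues the lemma is precisely what you verified: $\tilde A^\top$ maps into the primal $x$-block, and the corresponding block of $(\nabla F^\delta)^{-1}$ is positive semidefinite with kernel equal to the range of the constraint Jacobian transpose. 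So your route does not merely differ from the paper's proof; it closes a genuine gap in it.

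One point you should make explicit: the sign you flagged as the main obstacle. Your relations $\Delta v=-\nabla h\,\Delta x$ and $\Delta\mu=-\Sigma\,\Delta v$ presuppose that the $(v,v)$ block of $\nabla F_i^\delta$ is $+\Sigma_i=+V_i^{-1}M_i$, and that is indeed the correct Jacobian: differentiating $-\delta V_i^{-1}\mathds{1}+\mu_i$ with respect to $v_i$ gives $+\delta V_i^{-2}$, which equals $+V_i^{-1}M_i\succ0$ after substituting $M_i=\delta V_i^{-1}$, exactly as the paper's own accompanying text states. The matrix as displayed in the paper carries a spurious minus sign on this block; with that literal sign your identity would read $\Delta x^\top(\nabla_{xx}L-\nabla h^\top\Sigma\nabla h)\Delta x$, which becomes indefinite as $\delta\to0$ and $\Sigma$ blows up on active constraints, i.e., the lemma itself would fail. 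So the plus sign is not bookkeeping pedantry---it is exactly where the statement lives---and your instinct to single it out was correct.
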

		\begin{proof}
			From \eqref{eq:schurComp} we have $S = \sum_{i\in \mathcal{S}} \tilde{A}_i (\nabla F_i^\delta)^{-1} \tilde{A}_i^\top$. Observe that $S = \tilde{A} (\nabla F^\delta)^{-1} \tilde{A}^\top$, where ${\tilde{A}\doteq (\tilde{A}_1,\dots,\tilde{A}_{|\mathcal{S}|})}$ and $(\nabla F^\delta)^{-1} \doteq \text{diag}_{i\in\mathcal{S}}((\nabla F_i^\delta)^{-1})$. Because $\nabla_{xx} L_i$ is positive definite and because of LICQ in \autoref{ass:basic}, it follows from \cite[Lem. 16.1]{Nocedal2006} that the matrix $\nabla F_i^\delta$ is invertible. From the definition of $(\nabla F^\delta)^{-1}$ and the symmetry of $\nabla F_i^\delta$ it follows that $(\nabla F^\delta)^{-1}$ is symmetric and regular. Eigendecomposition hence gives ${(\nabla F^\delta)^{-1} = Q \Lambda^{1/2} (\Lambda^{1/2})^\top Q^\top}$ such that $S = B B^\top$, where $B \doteq \tilde{A} Q \Lambda^{1/2}$. LICQ ensures that $\tilde{A}$ and therefore $B$ have full row rank such that positive definiteness of $S$ follows from \cite[Thm 7.2.7]{Horn2013}.
		\end{proof}

	\renewcommand*{\bibfont}{\footnotesize}
	\footnotesize
	\printbibliography
	
\end{document}